\providecommand{\tabularnewline}{\\}
\theoremstyle{plain}
      \newtheorem{lem}{\protect\lemmaname}
      \newtheorem{lem}{\protect\lemmaname}[chapter]
\theoremstyle{plain}
	    \newtheorem{thm}{\protect\theoremname}
      \newtheorem{thm}{\protect\theoremname}[chapter]
\newcommand{\algoIndent}[1]{\myCodeBlock{}{#1}}
\journal{Journal of Computational Mathematics}
\DeclareMathOperator{\sinc}{\mathrm{sinc}}
\providecommand{\lemmaname}{Lemma}
\providecommand{\theoremname}{Theorem}
\begin{document}

\begin{frontmatter}{}

\title{Arbitrary order of convergence for Riesz fractional derivative via
central difference method}

\author{P.H.~Lam\corref{cor1}}

\ead{puiholam2-c@my.cityu.edu.hk}

\author{H.C.~So}

\ead{hcso@ee.cityu.edu.hk}

\author{C.F.~Chan}

\cortext[cor1]{Corresponding author}

\address{Department of Electrical Engineering, City University of Hong Kong,
Kowloon, Hong Kong SAR}
\begin{abstract}
We propose a novel method to compute a finite difference stencil for
Riesz derivative for artibitrary speed of convergence. This method
is based on applying a pre-filter to the Grünwald-Letnikov type central
difference stencil. The filter is obtained by solving for the inverse
of a symmetric Vandemonde matrix and exploiting the relationship between
the Taylor's series coefficients and fast Fourier transform. The filter
costs $O\left(N^{2}\right)$ operations to evaluate for $O\left(h^{N}\right)$
of convergence, where $h$ is the sampling distance. The higher convergence
speed should more than offset the overhead with the requirement of
the number of nodal points for a desired error tolerance significantly
reduced. The benefit of progressive generation of the stencil coefficients
for adaptive grid size for dynamic problems with the Grünwald-Letnikov
type difference scheme is also kept because of the application of
filtering. The higher convergence rate is verified through numerical
experiments. 
\end{abstract}
\begin{keyword}
Non-local operator\sep Riesz derivative\sep Finite difference
\end{keyword}

\end{frontmatter}{}

\section{Introduction}

Riesz derivative is the symmetrical version of the fractional Riemann\textendash Liouville
derivative. Fractional calculus is an important topic because fractional
derivatives can account for non-local effect, which is present in
many physics problems. For example, the Riesz derivative has been
applied to model wave propagation in viscoelastic material and fractional
electrical impedance in human skin \citep{lazovic2014modeling,treeby2010modeling}.
One of the most efficient methods to approximate spatial fractional
derivatives is based on Grünwald-Letnikov derivatives. The development
began with the derivation of the central difference scheme to approximate
the Riesz potential in \citep{ortigueira2006rieszpotential}. The
order of accuracy was found to be of second order later in \citep{celik2012cranknicolson}.
In \citep{zhao2014afourthorder,ding2014highorder}, the order of convergence
was increased to 4 by applying another finite difference on the operator
such that the second order term in the Maclaurin series is eliminated.
Because the matrix system resulting from the fractional difference
operation is not sparse, there is practically no additional cost in
using a highest order approximation in a dynamic problem aside from
the potential overhead in the evaluation of the coefficients for the
operator. This motivates us to study methods to increase the order
of approximation to an arbitrary order so that the convergence is
faster, leading to reduced number of nodes required to achieve accurate
results. In this paper, we propose a prefilter approach to generalise
the canceling of error terms in \citep{zhao2014afourthorder,ding2014highorder}.
The result is a filter which maximizes the flatness of the fractional
power of sinc function at the origin.

In order to understand our approach, we first discuss the development
of higher order finite difference approximation using the Grünwald-Letnikov
derivatives for Riesz derivative. Starting from the very basic, Grünwald-Letnikov
derivatives are obtained by applying fundamental theorem of calculus
recursively as
\begin{align}
f^{\prime}\left(x\right) & =\lim_{h\rightarrow0}\frac{f\left(x+h\right)-f\left(x\right)}{h},\\
f^{\prime\prime}\left(x\right) & =\lim_{h\rightarrow0}\frac{f^{\prime}\left(x+h\right)-f^{\prime}\left(x\right)}{h},\nonumber \\
 & =\lim_{h\rightarrow0}\frac{f\left(x+2h\right)-2f\left(x+h\right)+f\left(x\right)}{h^{2}},\\
f^{\prime\prime}\left(x\right) & =\lim_{h\rightarrow0}\frac{f^{\prime}\left(x+2h\right)-2f^{\prime}\left(x+h\right)+f^{\prime}\left(x\right)}{h^{2}},\nonumber \\
 & =\lim_{h\rightarrow0}\frac{f\left(x+3h\right)-3f\left(x+2h\right)+3f\left(x+h\right)-f\left(x\right)}{h^{3}},\\
 & \vdots\nonumber 
\end{align}
and so on. Shifting the difference to the centre, we have
\begin{align}
f^{\prime}\left(x\right) & =\lim_{h\rightarrow0}\frac{f\left(x+\frac{h}{2}\right)-f\left(x-\frac{h}{2}\right)}{h},\\
f^{\prime\prime}\left(x\right) & =\lim_{h\rightarrow0}\frac{f^{\prime}\left(x+\frac{h}{2}\right)-f^{\prime}\left(x-\frac{h}{2}\right)}{h}\nonumber \\
 & =\lim_{h\rightarrow0}\frac{f\left(x+h\right)-2f\left(x\right)+f\left(x-h\right)}{h^{2}},\\
f^{\prime\prime\prime}\left(x\right) & =\lim_{h\rightarrow0}\frac{f^{\prime}\left(x+h\right)-2f^{\prime}\left(x\right)+f^{\prime}\left(x-h\right)}{h^{2}}\nonumber \\
 & =\lim_{h\rightarrow0}\frac{f\left(x+\frac{3h}{2}\right)-3f\left(x+\frac{h}{2}\right)+3f\left(x-\frac{h}{2}\right)-f\left(x-\frac{3h}{2}\right)}{h^{3}},\\
 & \vdots\nonumber 
\end{align}
Without the limits, the first Grünwald-Letnikov derivative is equivalent
to first order accurate finite difference. Because of this, the subsequent
derivatives are also first order accurate while the central difference
version has second order of accuracy.

Grünwald-Letnikov derivatives of a function can be written as a discrete
convolution of the function evaluated at the discrete points with
a stencil. For the central difference, the derivatives can be expressed
as
\begin{align}
\mathrm{D}_{h}^{1}f & =\mathrm{S}_{\frac{1}{2}}\left\{ f\right\} \ast\frac{1}{h}\begin{bmatrix}1 & -1\end{bmatrix},\\
\mathrm{D}_{h}^{2}f & =\Delta_{h}^{1}\left\{ \mathrm{D}_{h}^{1}f\right\} \nonumber \\
 & =\mathrm{S}_{1}\left\{ f\right\} \ast\frac{1}{h}\begin{bmatrix}1 & -1\end{bmatrix}\ast\frac{1}{h}\begin{bmatrix}1 & -1\end{bmatrix},\\
 & \vdots\nonumber 
\end{align}
where the discrete convolution operator is defined as $\left(f\ast g\right)\left[n\right]\triangleq\sum_{m}f\left[n-m\right]g\left[m\right]$,
$f\left[n\right]=f\left(nh\right)$, $\mathrm{S}_{t}$ is the shift
operator defined as $\mathrm{S}_{t}\left\{ f\right\} \left[n\right]=f\left[n+t\right]$,
and the array is 0 indexed. Now, let $H_{k}$ be the stencil of $k$-th
derivative. Then the $k$-th derivative can be written as
\begin{equation}
\mathrm{D}_{h}^{k}f=\frac{1}{h^{k}}\left(S_{\frac{k}{2}}\left\{ f\right\} \ast H_{k}\right),
\end{equation}
with the recursive relationship
\begin{align}
H_{k} & =H_{k-1}\ast\begin{bmatrix}1 & -1\end{bmatrix},\\
H_{1} & =\begin{bmatrix}1 & -1\end{bmatrix}.
\end{align}
Applying discrete time Fourier transform to $H_{k}$, we have
\begin{align}
\mathcal{F}\left\{ H_{k}\right\}  & =\left(1-\mathrm{e}^{-\mathrm{i}\omega}\right)^{k}=\sum_{m=0}^{k}\binom{k}{m}\left(-1\right)^{m}\mathrm{e}^{-\mathrm{i}\omega m}.\label{eq:eq:integer stencil dtft}
\end{align}
This leads to the solution $\{H_{k}\}_{m}=\left(-1\right)^{m}\binom{k}{m}$.
To understand the error convergence, we replace $m$ with $m\frac{h}{2}$
in \eqref{eq:eq:integer stencil dtft} to obtain the Fourier transform
of the $k$-th derivative of $f$
\begin{align}
\mathcal{F}\left\{ \mathrm{D}_{h}^{k}f\right\}  & =\left(\mathrm{e}^{\mathrm{i}\frac{\omega h}{2}}-\mathrm{e}^{-\mathrm{i}\frac{\omega h}{2}}\right)^{k}\frac{\tilde{f}\left(\omega\right)}{h^{k}}=\left(\mathrm{i}\omega\sinc\left(\frac{\omega h}{2}\right)\right)^{k}\tilde{f}\left(\omega\right).
\end{align}
Since $\sinc\left(x\right)$ can be expanded as a Maclaurin series
as $1+O\left(x^{2}\right)$ and it would converge for $x$ from $-\pi$
to $\pi$, this approximation is second order accurate, assuming $f$
is sufficiently smooth such that the magnitude of its frequency response
drops significantly below the difference between 1 and the sinc function
in its main lobe.

The binomial series \eqref{eq:eq:integer stencil dtft} can be generalised
to non-integer $k$ for fractional derivatives, as seen in \citep{ortigueira2006rieszpotential,jacobs2015anew},
because the magnitude of the variable in the series is not bigger
than 1. Consider the Riesz derivative defined as
\begin{equation}
\frac{\partial^{\alpha}f\left(x\right)}{\partial\left|x\right|^{\alpha}}\triangleq C_{\alpha}\left(_{a}D_{x}^{\alpha}+_{x}D_{b}^{\alpha}\right)f\left(x\right),\label{eq:Riesz}
\end{equation}
where $f\left(x\right)=0$ everywhere except $x\in\left[a,b\right]$,
$C_{\alpha}=-\frac{1}{2\cos\frac{\pi\alpha}{2}}$, and the left and
right Riemann-Liouville fractional derivatives are defined as
\begin{align}
_{a}D_{x}^{\alpha}f\left(x\right) & \triangleq\frac{1}{\Gamma\left(m-\alpha\right)}\frac{\partial^{m}}{\partial x^{m}}\int_{a}^{x}\frac{f\left(\xi\right)}{\left(x-\xi\right)^{\alpha+1-m}}\,\mathrm{d}\xi,\\
_{x}D_{b}^{\alpha}f\left(x\right) & \triangleq\frac{1}{\Gamma\left(m-\alpha\right)}\frac{\partial^{m}}{\partial x^{m}}\int_{x}^{b}\frac{f\left(\xi\right)}{\left(\xi-x\right)^{\alpha+1-m}}\,\mathrm{d}\xi,
\end{align}
where $m=\left\lceil \alpha\right\rceil $. Since $f$ can be scaled
by introducing dimensionless variables, we consider $a=0$, $b=1$
for the Riesz derivative throughout the paper. The Riesz derivative
has the frequency response $-\left|\omega\right|^{\alpha}$. Therefore,
for a stencil which approximates the Riesz derivative to second order
of accuracy, we seek a solution of the inverse transform of $\left|\omega\sinc\left(\frac{\omega h}{2}\right)\right|^{\alpha}$.
Let the fractional convolution kernel be $K_{\alpha}$ such that 
\begin{equation}
\mathcal{F}\left\{ K_{\alpha}\right\} =\left|\omega\sinc\left(\frac{\omega h}{2}\right)\right|^{\alpha}.\label{eq:transform of 2nd order riesz kernel}
\end{equation}
Then the solution is the Fourier series coefficient of its transform
given by
\begin{equation}
K_{\alpha}\left(\left|n\right|h\right)=\frac{2^{\alpha}h}{2\pi h^{\alpha}}\int_{0}^{\frac{2\pi}{h}}\sin^{\alpha}\left(\frac{\omega h}{2}\right)\mathrm{e}^{-\mathrm{i}\omega\left|n\right|h}\,\mathrm{d}\omega.\label{eq:2nd order riesz centred fourier coefficient}
\end{equation}
The coefficients are symmetrical because the frequency response is
real and symmetric about the origin. Therefore, it does not matter
whether the exponent is negative or not but the negative choice will
be apparent later. Owning to the binomial series and Cauchy residue
theorem, each biomial coefficent can be expressed in the integral
form as
\begin{equation}
\binom{a}{b}=\mathrm{Res}\left(\frac{\left(1+z\right)^{a}}{z^{b+1}},0\right)=\frac{1}{2\pi}\int_{-\pi}^{\pi}\left(1+\mathrm{e}^{\mathrm{i}\omega}\right)^{a}\mathrm{e}^{-\mathrm{i}\omega b}\,\mathrm{d}\omega,\label{eq:residiue theorem}
\end{equation}
for any real $a$ and $b>-1$. Eq. \eqref{eq:2nd order riesz centred fourier coefficient}
can be rewritten in the same form to arrive at the solution as
\begin{align}
K_{\alpha}\left(\left|n\right|h\right) & =\frac{\left(-1\right)^{n}}{2\pi h^{\alpha}}\int_{-\pi}^{\pi}\left(1+\mathrm{e}^{\mathrm{i}\omega}\right)^{\alpha}\mathrm{e}^{-\mathrm{i}\omega\left(\frac{\alpha}{2}+n\right)}\,\mathrm{d}\omega=\frac{\left(-1\right)^{n}}{h^{\alpha}}\binom{\alpha}{\frac{\alpha}{2}+n}.
\end{align}
 This method to solve binomial related problems is known as the Egorychev
method\citep{egorychev1984integral}. Therefore, to increase the convergence
order, we propose a filter with a frequency response $G_{\alpha}$
such that $G_{\alpha}\cdot K_{\alpha}$ now has a response of $\left|\omega\right|^{\alpha}\left(1+O\left(h^{N}\right)\right)$,
where $N$ is the order of convergence.

This paper is organized as follows. In Section 2, we start with presenting
our method, which is then followed by the details of the algorithms.
In Section 3, we discuss the convergence behaviour of the method,
which shows that the convergence is indeed $O\left(h^{N}\right)$
subject to the smoothness of the function, and we comment on the filter's
positivity and the eigenvalue problem. Numerical results are presented
in Section 4 to validate the error prediction in the previous section
before the conclusion in the final section.

\section{Method \& Algorithm}

In this section, we derive the method to reduce the error of approximation
of the Riesz derivative and algorithms to compute the stencil for
this purpose. As discussed previously, we seek a filter to improve
the flatness of the sinc function in the neighbourhood of the origin
such that resulting frequency response of the finite difference operator
is closer to $-\left|\omega\right|^{\alpha}$. We expand the fractional
power of the sinc function in \eqref{eq:2nd order riesz centred fourier coefficient}
as the Maclaurin series as
\begin{equation}
\sinc\left(\frac{x}{2}\right)^{\alpha}=1+\sum_{n=1}^{N_{h}}a_{n}x^{2n}+O\left(x^{N}\right),\label{eq:error}
\end{equation}
where $N$ is the choice of convergence order, $N_{h}=\left\lceil \frac{N}{2}\right\rceil -1$,
$a_{n}=\frac{1}{\left(2n\right)!}\frac{\mathrm{d}\sinc\left(\frac{x}{2}\right)^{\alpha}}{\mathrm{d}x}|_{x=0}$.
The odd coefficients are ignored because it is obvious that they are
0 for an even function. Because this series converges up to $\left|x\right|<\pi$,
we know that each higher order term is always smaller in magnitude
than the respective 1 lower order term below the Nyquist frequency.
Then it is clear that by cancelling out the lower order ones, we will
improve the flatness of this function, provided that the filter does
not raise the magnitude of the coefficients of the higher order terms.
Now, let us consider only the problem of eliminating the lower order
terms first. Let the filter be $G_{\alpha}$ such that
\begin{equation}
\mathcal{F}\left\{ G_{\alpha}\right\} =\tilde{G}_{\alpha}=g_{0}^{\alpha}+2\sum_{m=1}^{N_{h}}g_{m}^{\alpha}\cos\left(mx\right).\label{eq:filter response}
\end{equation}
The transform is a cosine series because the filter is obviously real
symmetric, and there are no odd terms in the sinc expansion to cancel.
Expanding each $\cos\left(nx\right)$ as Maclaurin series, we have
\begin{equation}
\cos\left(m\omega h\right)=1+\sum_{n=1}^{N_{h}-1}c_{n,m}x^{2k}+O\left(x^{N}\right),
\end{equation}
where $c_{n,m}=\frac{\left(-1\right)^{k}n^{2k}}{\left(2k\right)!}.$
Let $b_{n}$ be the series coefficients of $\tilde{G}_{\alpha}$.
They can be expressed in matrix form as
\begin{equation}
\begin{bmatrix}1 & 1 & \cdots & 1\\
0 & c_{1,1} & \cdots & c_{1,N_{h}}\\
\vdots & \vdots & \ddots & \vdots\\
0 & c_{N_{h},1} & \cdots & c_{N_{h},N_{h}}
\end{bmatrix}\begin{bmatrix}1 & 0 & \cdots & 0\\
0 & 2 & \ddots & \vdots\\
\vdots & \ddots & \ddots & 0\\
0 & 0 & \cdots & 2
\end{bmatrix}\begin{bmatrix}g_{0}^{\alpha}\\
g_{1}^{\alpha}\\
\vdots\\
g_{N_{h}}^{\alpha}
\end{bmatrix}=\begin{bmatrix}1\\
b_{1}\\
\vdots\\
b_{N_{h}}
\end{bmatrix}.\label{eq:C matrix}
\end{equation}
The coefficients of the series from the product of the two polynomials
can be obtained through discrete convolution, and we set the coefficients
for $n>0$ up to $2N_{h}$ be 0 to eliminate the lower order terms.
This can be expressed in matrix form as
\begin{equation}
\begin{bmatrix}1 & 0 & \cdots & 0\\
a_{1} & 1 & \ddots & \vdots\\
\vdots & \ddots & \ddots & 0\\
a_{N_{h}} & a_{N_{h}-1} & \cdots & 1
\end{bmatrix}\begin{bmatrix}1\\
b_{1}\\
\vdots\\
b_{N_{h}}
\end{bmatrix}=\begin{bmatrix}1\\
0\\
\vdots\\
0
\end{bmatrix}=\mathbf{e}_{0}.\label{eq:system of equation matrix form small}
\end{equation}
The matrix involving $c_{n,m}$ is not convenient to solve but it
gives us insight of the actual problem size. We rewrite it as a larger
problem, in terms of the expansion of the exponential function, leading
to
\begin{equation}
\begin{bmatrix}\frac{\left(-\mathrm{i}\right)^{0}}{0!} & 0 & \cdots & 0\\
0 & \frac{\left(-\mathrm{i}\right)^{1}}{1!} & \ddots & \vdots\\
\vdots & \ddots & \ddots & 0\\
0 & 0 & \cdots & \frac{\left(-\mathrm{i}\right)^{2N_{h}}}{\left(2N_{h}\right)!}
\end{bmatrix}\mathbf{V}^{\mathrm{T}}\left(\left\{ -N_{h},\ldots,N_{h}\right\} \right)\begin{bmatrix}g_{N_{h}}^{\alpha}\\
g_{N_{h}-1}^{\alpha}\\
\vdots\\
g_{0}^{\alpha}\\
\vdots\\
g_{N_{h}}^{\alpha}
\end{bmatrix}=\begin{bmatrix}1\\
0\\
b_{1}\\
\vdots\\
0\\
b_{N_{h}}
\end{bmatrix},
\end{equation}
where $\mathbf{V}\left(\left\{ x_{0},x_{1},\ldots,x_{N}\right\} \right)$
is the Vandermonde matrix defined as
\begin{equation}
\mathbf{V}\left(\left\{ x_{0},x_{1},\ldots,x_{N}\right\} \right)=\begin{bmatrix}x_{0}^{0} & x_{0}^{1} & \cdots & x_{0}^{N}\\
x_{1}^{0} & x_{1}^{1} & \cdots & x_{1}^{N}\\
\vdots & \vdots & \cdots & \vdots\\
x_{N}^{0} & x_{N}^{1} & \cdots & x_{N}^{N}
\end{bmatrix}.
\end{equation}
Let $v_{i,j}^{N}=\left\{ \mathbf{V}^{-1}\left(\left\{ x_{0},x_{1},\ldots,x_{N}\right\} \right)\right\} _{i,j}$,
the inverse of the Vandermonde matrix can be solved using the following
recurrence equations \citep{bjorck1970solution}
\begin{align}
v_{i,k}^{k} & =\frac{\prod_{n=0}^{k-2}\left(x_{k-1}-x_{n}\right)}{\prod_{n=0}^{k-1}\left(x_{k}-x_{n}\right)}\left(v_{i-1,k-1}^{k-1}-x_{k-1}v_{i,k-1}^{k-1}\right),\ \text{for }0<i\le k,\\
v_{0,k}^{k} & =-\frac{\prod_{n=0}^{k-2}\left(x_{k-1}-x_{n}\right)}{\prod_{n=0}^{k-1}\left(x_{k}-x_{n}\right)}x_{k-1}v_{0,k-1}^{k-1},
\end{align}
and for $0\le j<k$
\begin{align}
v_{i,j}^{k} & =\frac{x_{k}v_{i,j}^{k-1}-v_{i-1,j}^{k-1}}{x_{k}-x_{j}},\ \text{for }0<i\le k,\\
v_{0,j}^{k} & =\frac{x_{k}}{x_{k}-x_{j}}v_{0,j}^{k-1}.
\end{align}
where $k$ is looped over $k=0,\ldots,N$. All $v_{i,j}^{0}$ are
0 initialised except for $v_{0,0}^{0}=1$. However, for our special
case, there are some properties which can be exploited for further
optimisation. In this special case, $k$ is even, and $x_{\frac{k}{2}+n}=x_{\frac{k}{2}-n}=n$
for $n=0,\,\ldots,\,\frac{k}{2}$. Since we are only concerned about
even $k$, we map $k\mapsto\frac{k}{2}$ and let the size grow 2 per
step. To find the new recurrence relationship for each step, we write
out the Lagrange polynomial basis as 
\begin{equation}
L_{j}^{k}\left(x\right)=\frac{\left(-1\right)^{k-j}\prod_{n=-k,\,n\neq j}^{k}\left(x-n\right)}{\left(k+j\right)!\left(k-j\right)!}=\sum_{i=0}^{2k}v_{i,N+j}^{k}x^{i}.\label{eq:even lagrange basis}
\end{equation}
Each basis has the following symmetry
\begin{equation}
L_{j}^{k}\left(x\right)=L_{-j}^{k}\left(-x\right)=\sum_{i=0}^{2k}v_{i,N-j}^{k}\left(-1\right)^{i}x^{i},
\end{equation}
which leads to $v_{2i,N+j}^{k}=v_{2i,N-j}^{k}$. This relationship
also arises from the fact that $g_{j}$ is symmetric. Therefore, we
need not consider the left half of the Vandermonde matrix inverse,
and we can shift the indexing to the left by $N$.

The second property is 
\begin{equation}
\frac{v_{2i-1,j}^{k}}{v_{2i,j}^{k}}=j,\ \text{for }i>0,\,0\le j\le k,\label{eq:even vandermonde odd,even ratio}
\end{equation}
 so the odd terms are not required for the evaluation of future coefficients.
To prove this, we write out the recurrence relationship for this special
case first. For $j=k$, we have
\begin{equation}
L_{k}^{k}\left(x\right)=\frac{\left(x+k\right)\left(x-k+1\right)}{\left(2k\right)\left(2k-1\right)}L_{k-1}^{k-1}\left(x\right),
\end{equation}
which leads to the recurrence relationship of the coefficient after
substituting in \eqref{eq:even lagrange basis},
\begin{align}
v_{i,k}^{k} & =\frac{v_{i-2,k-1}^{k-1}+v_{i-1,k-1}^{k-1}-k\left(k-1\right)v_{i,k-1}^{k-1}}{\left(2k\right)\left(2k-1\right)},\ \text{for }i=2,\ldots,2k\label{eq:recur. for j=00003Dk}\\
v_{1,k}^{k} & =\frac{v_{0,k-1}^{k-1}-k\left(k-1\right)v_{1,k-1}^{k-1}}{\left(2k\right)\left(2k-1\right)},\ v_{0,k}^{k}=0,\ \text{for }k>0,\label{eq:recur. for j=00003Dk i=00003D0}
\end{align}
and for $j<k$,
\begin{equation}
L_{j}^{k}\left(x\right)=\frac{x^{2}-k^{2}}{j^{2}-k^{2}}L_{j}^{k-1}\left(x-1\right),
\end{equation}
leads to
\begin{align}
v_{i,j}^{k} & =\frac{k^{2}v_{i,j}^{k-1}-v_{i-2,j}^{k-1}}{\left(k+j\right)\left(k-j\right)},\ \text{for }i=2,\ldots,2k,\label{eq:recurrence for j<k}\\
v_{i,j}^{k} & =\frac{k^{2}v_{i,j}^{k-1}}{\left(k+j\right)\left(k-j\right)},\ \text{for }i=0,1.\label{eq:recurrence for j<k i=00003D0}
\end{align}
One can verify that the property \eqref{eq:even vandermonde odd,even ratio}
is true for $k=1$ using the recurrence relationships. We assume this
is also true for other $k$ up to $N-1$. Then for $k=N$, $j=N$,
we substitute \eqref{eq:recur. for j=00003Dk} into \eqref{eq:even vandermonde odd,even ratio},
we get
\begin{equation}
\frac{v_{2i-1,N}^{N}}{v_{2i,N}^{N}}=\frac{v_{2i-3,N-1}^{N-1}+v_{2i-2,N-1}^{N-1}-N\left(N-1\right)v_{2i-1,N-1}^{N-1}}{v_{2i-2,N-1}^{N-1}+v_{2i-1,N-1}^{N-1}-N\left(N-1\right)v_{2i,N-1}^{N-1}}=\frac{N\left(v_{2i-2,N-1}^{N-1}-\left(N-1\right)^{2}v_{2i,N-1}^{N-1}\right)}{\left(v_{2i-2,N-1}^{k-1}-\left(N-1\right)^{2}v_{2i,N-1}^{N-1}\right)}=N.
\end{equation}
This is similarly proven by induction for the case $j<N$ as such
\begin{equation}
\frac{v_{2i-1,j}^{N}}{v_{2i,j}^{N}}=\frac{k^{2}v_{2i-1,j}^{N-1}-v_{2i-3,j}^{N-1}}{k^{2}v_{2i,j}^{N-1}-v_{2i-2,j}^{N-1}}=\frac{j\left(k^{2}v_{2i,j}^{N-1}-v_{2i-2,j}^{N-1}\right)}{\left(k^{2}v_{2i,j}^{N-1}-v_{2i-2,j}^{N-1}\right)}=j.
\end{equation}
For the case $i=1$, one can see it is also true because from \eqref{eq:recur. for j=00003Dk i=00003D0}
and \eqref{eq:recurrence for j<k i=00003D0}, we have $v_{0,j}^{k}=0$
for all $j>0$ and $v_{0,0}^{k}=1$. Therefore, by induction, \eqref{eq:even vandermonde odd,even ratio}
is true for all $k>0$. All together, these $v_{i,j}^{N}$ at even
$i$ and right half of the Vandermonde matrix inverse constitute the
inverse of the matrix product involving $c_{n,m}$ and the diagonal
matrix on the left hand side of \eqref{eq:C matrix}.

Next, we discuss the evaluation of $a_{j}$, the derivatives of the
fractional power of sinc function at 0. This function is a composite
function and its derivatives are given by Faà di Bruno's formula.
However, it is not convenient for use because of the involvement of
combinatorics. Instead, one can simply find the corresponding Maclaurin
series coefficients. They may be efficiently recovered from the inverse
transform of fractional power of the fast Fourier transform of the
Maclaurin series coefficients of the sinc function given by
\begin{align}
\frac{1}{\left(2k\right)!}\frac{\mathrm{d}^{2k}}{\mathrm{d}x^{2k}}\sinc\left(\frac{x}{2}\right) & |_{x=0}=\frac{\left(-1\right)^{k}}{\left(2k+1\right)!}\frac{1}{2^{2k}},\label{eq:sinc maclurin}
\end{align}
while the odd cases are all 0. To prove this, one can simply apply
Cauchy residue theorem again similar to \eqref{eq:residiue theorem}
for the infinite case. And because the Maclaurin series converges
about the unit circle for the sinc function, the result is valid.
Then to prove that this is also true for the finite case, one just
has to realise that the fractional number can be split up into an
integer numerator $p$ and an integer denominator $q$. Then there
exists some transform coefficients such that their $q$ power gives
the original transform coefficients because the integer power of transform
coefficients is just convolution of the coefficients with themselves
an integer number of times. Therefore, the same coefficients from
the infinite case can be recovered using only the finite transform
coefficients. Since we would already have the transform of those coefficients,
\eqref{eq:system of equation matrix form small} can be solved by
directly applying the inverse transform to the reciprocal of the transform
coefficients. All in all, let $\check{a}_{n}$ be the $2n$-th Maclaurin
series coefficients of the sinc function given by \eqref{eq:sinc maclurin},
$b_{n}$ in \eqref{eq:system of equation matrix form small} are given
by
\begin{align}
\tilde{a}_{m} & =\sum_{n=0}^{N_{h}}\check{a}_{n}\mathrm{e}^{-\mathrm{i}\omega_{m}n},\\
b_{n} & =\frac{1}{2N+1}\sum_{m=0}^{2N_{h}}\tilde{a}_{m}^{\alpha}\mathrm{e}^{\mathrm{i}\omega_{m}n}=\frac{1}{2N+1}\left(\tilde{a}_{0}^{\alpha}+2\sum_{m=1}^{N}\Re\left\{ \tilde{a}_{m}^{\alpha}\mathrm{e}^{\mathrm{i}\omega_{m}n}\right\} \right),
\end{align}
where $\omega_{m}=\frac{2\pi m}{2N+1}$.

To summarise, we provide the pseudo code to obtain the stencil for
higher order central difference scheme for the Riesz derivative here.
Algorithm \ref{alg:Inversion-of-V} describes the procedure to invert
the transpose of the matrix product on the left hand side of \eqref{eq:C matrix}.
The columns are scaled to better balance the scaling of the sinc Maclaurin
series coefficients. Algorithm \ref{alg:Riesz-derivative-higher}
outputs the right half of the stencil for $N_{x}$ number of elements
between 0 and 1. There are $N_{x}-1$ number of elements in the output
because the derivatives at the end points are not needed in dynamic
problems as the boundary condition is set there. Let $\mathbf{k}$
be the vector returned from Algorithm \ref{alg:Riesz-derivative-higher},
a $\left(N_{x}-2\right)\times\left(N_{x}-2\right)$ Toeplitz matrix
$\mathbf{D}$ can be constructed from $\mathbf{k}$ with $\left\{ \mathbf{D}\right\} _{n,m}=$$\left\{ \mathbf{k}\right\} _{\left|n+1-m\right|}$
to approximate the derivative as
\begin{equation}
\frac{\partial^{\alpha}f\left(x\right)}{\partial\left|x\right|^{\alpha}}|_{x=\frac{n}{N_{x}-1}}\sim\sum_{m=1}^{N_{x}-2}\left\{ \mathbf{D}\right\} _{n-1,m-1}f\left(x_{m}\right)+\left\{ \mathbf{k}\right\} _{n}f\left(0\right)+\left\{ \mathbf{k}\right\} _{N_{x}-1-n}f\left(1\right)\ \text{for }n=1,\ldots,N_{x}-2.\label{eq:riesz approx}
\end{equation}
If adaptive grid size is required, one can save the last $2N_{h}$
elements of $\mathbf{k}$ before the convolution as well as the filter
coefficients of $\mathbf{g}$ for the generation of elements in higher
indices. The generation of the coefficients can then be resumed, after
scaling the coefficients for the larger stencil, from line 14 of Algorithm
\ref{alg:Riesz-derivative-higher}.

\begin{algorithm}
\textbf{Input}: $N$\\
Initialise matrix $\left\{ \mathbf{V}\right\} _{n,m}\leftarrow0$
for $n=0\,\ldots\,N,\ m\,\ldots\,=0$, and then $\left\{ \mathbf{V}\right\} _{0,0}\leftarrow1$\\
\textbf{for} $k=1,\,\ldots,\,N$
\algoIndent{$\left\{ \mathbf{V}\right\} _{n,k}\leftarrow\left(\left(2n\right)\cdot\left\{ \mathbf{V}\right\} _{n-1,k-1}+\left(k-1\right)\cdot\left\{ \mathbf{V}\right\} _{n,k-1}\right)/\left(2k\left(2k-1\right)\right)$
for $n=1\,\ldots\,k$\\
$\left\{ \mathbf{V}\right\} _{n,k}\leftarrow\left\{ \mathbf{V}\right\} _{n,k}-\frac{k\left(k-1\right)}{2k\left(2k-1\right)}\cdot\left\{ \mathbf{V}\right\} _{n,k-1}$
for $n=1\,\ldots\,k-1$\\
$\left\{ \mathbf{V}\right\} _{k,m}\leftarrow-\frac{2k}{\left(k-m\right)\left(k+m\right)}\cdot\left\{ \mathbf{V}\right\} _{k-1,m}$
for $m=0\,\ldots\,k-1$\\
$\left\{ \mathbf{V}\right\} _{n,m}\leftarrow\left(k^{2}\cdot\left\{ \mathbf{V}\right\} _{n,m}-\left(2n\right)\cdot\left\{ \mathbf{V}\right\} _{n-1,m}\right)/\left(\left(k-m\right)\left(k+m\right)\right)$
for $n=1\,\ldots\,k-1$, $m=0\,\ldots\,k-1$}
\textbf{endfor}\\
\textbf{return $\mathbf{V}$}\\
\caption{\label{alg:Inversion-of-V}Inversion of integer symmetric Vandermonde
matrix with each row $n$ scaled by $\prod_{k=1}^{n}k\left(k-\frac{1}{2}\right)$}
\end{algorithm}
\begin{algorithm}[h]
\textbf{Input}: $\alpha,N_{h},N_{x}$\\
Evaluate $\check{a}_{n}$ for $n=0,\,\ldots,\,N_{h}$ using \eqref{eq:sinc maclurin}\\
$\tilde{\mathbf{a}}\leftarrow$FFT$\left(\check{\mathbf{a}}\right)$\\
$\mathbf{b}\leftarrow\text{IFFT}\left(\tilde{\mathbf{a}}^{\alpha}\right)$,
superscript for element-wise power\\
$S\leftarrow-1$\\
\textbf{for} $n=0\,\ldots\,N_{h}$
\algoIndent{$\left\{ \mathbf{b}\right\} _{n}\leftarrow S\cdot\left\{ \mathbf{b}\right\} _{n-1}$\\
$S\leftarrow-4S$}
\textbf{endfor}\\
$\mathbf{V}\leftarrow$ Algorithm \ref{alg:Inversion-of-V}$\left(N_{h}\right)$\\
$\mathbf{g}\leftarrow\mathbf{V}\mathbf{b}$\\
Initialize $\left\{ \mathbf{k}\right\} _{0}=\left(N_{x}-1\right)^{\alpha}\frac{\Gamma\left(\alpha+1\right)}{\Gamma\left(\frac{\alpha}{2}+1\right)^{2}}$\\
\textbf{for} $k=1,\,\ldots,\,N_{x}+N_{h}-2$
\algoIndent{$\left\{ \mathbf{k}\right\} _{k}\leftarrow-\frac{\frac{\alpha}{2}-k+1}{\frac{\alpha}{2}+k}\left\{ \mathbf{k}\right\} _{k-1}$}
\textbf{endfor}\\
$\mathbf{k}\leftarrow\mathbf{k}\ast_{\text{mirrored}}\mathbf{g}$,
`mirrored' implies the convolution has $\mathbf{k}$ and $\mathbf{g}$
mirrored to the left about position 0 as negative index elements\\
return $\mathbf{k}$ for elements at index $0,\,\ldots,\,N_{x}-2$\\
\caption{\label{alg:Riesz-derivative-higher}Riesz derivative higher order
central difference stencil}
\end{algorithm}

\section{\label{sec:Theory}Theory}

First, we study the error convergence. We follow a similar approach
given in \citep{zhao2014afourthorder,ding2014highorder}. A lemma
for the smoothness requirement of the function is given before the
error analysis. Because of this lemma, which can be considered an
extension to the Riemann-Lebesgue lemma, the order requirement for
derivative continuity is lower than the ones given in \citep{zhao2014afourthorder,ding2014highorder}.
\begin{lem}
\label{lem:decay lemma}There exists a constant $C_{0}$ such that
if a function $f:\mathbb{R}\rightarrow\mathbb{R}$ is $\mathrm{C}^{n}$,
\begin{equation}
\left|\mathcal{F}\left\{ f\right\} \left(\omega\right)\right|\le\left|\omega\right|^{-\left(n+2\right)}C_{0}.\label{eq:fourier decay upperbound}
\end{equation}
\end{lem}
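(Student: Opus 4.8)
The plan is to exploit the compact support that is built into the Riesz setting: since $f$ vanishes outside $[a,b]$ and is globally $\mathrm{C}^{n}$, continuity of each derivative forces the endpoint conditions $f^{(k)}(a)=f^{(k)}(b)=0$ for every $k=0,\dots,n$. This is the structural fact that makes the bound stronger than the naive Riemann--Lebesgue rate $o(|\omega|^{-n})$, and the whole argument hinges on it. I would state this as the first reduction before touching any integral.

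Next I would write $\mathcal{F}\{f\}(\omega)=\int_{a}^{b}f(x)\,\mathrm{e}^{-\mathrm{i}\omega x}\,\mathrm{d}x$ and integrate by parts $n+1$ times. At each of these steps the boundary contribution involves one of $f^{(0)},\dots,f^{(n)}$ evaluated at $a$ and $b$, so by the endpoint conditions above every boundary term vanishes. This collapses the transform to
\begin{equation}
\mathcal{F}\{f\}(\omega)=\frac{1}{(\mathrm{i}\omega)^{n+1}}\int_{a}^{b}f^{(n+1)}(x)\,\mathrm{e}^{-\mathrm{i}\omega x}\,\mathrm{d}x ,
\end{equation}
which already delivers $n+1$ powers of $1/\omega$ for free.

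The final, $(n+2)$-th, power is the crux, and it is where the \emph{extension of Riemann--Lebesgue} enters. Here I would argue that the remaining oscillatory integral is itself $O(|\omega|^{-1})$: treating the zero-extension of $f^{(n+1)}$ as a function of bounded variation on $[a,b]$, a single Stieltjes integration by parts gives $\left|\int_{a}^{b}f^{(n+1)}(x)\,\mathrm{e}^{-\mathrm{i}\omega x}\,\mathrm{d}x\right|\le V/|\omega|$, where $V$ bounds the total variation of $f^{(n+1)}$ together with the jumps it acquires at $a$ and $b$. Multiplying the two estimates and absorbing $V$ and the endpoint data into a single constant $C_{0}$ yields $|\mathcal{F}\{f\}(\omega)|\le C_{0}|\omega|^{-(n+2)}$. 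If one is willing to assume the slightly stronger regularity $f^{(n+2)}\in L^{1}$, this last factor also follows from one further classical integration by parts; the bounded-variation route is what secures it under the weakest hypothesis, which is presumably the point of improving on the smoothness demands of the referenced works.

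I expect the main obstacle to be precisely this last power rather than the first $n+1$. The boundary terms vanish automatically, but extracting the final factor of $1/\omega$ from continuity alone is false in general, since a merely continuous compactly supported function need not decay faster than $o(1)$; the proof must therefore lean on the piecewise smoothness (bounded variation of the top derivative) that the intended class of functions enjoys. The secondary technical points are checking that the $(n+1)$-fold integration by parts is legitimate under the assumed regularity, and making explicit that $C_{0}$ depends on $f$ through $\|f^{(n+1)}\|_{\infty}$ and its total variation, so that the constant is not literally universal but uniform over any class with a common variation bound.
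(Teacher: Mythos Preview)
Your proposal is correct and follows essentially the same route as the paper: integrate by parts $n+1$ times using the compact support to kill the boundary terms, then extract the final factor of $|\omega|^{-1}$ by one more integration by parts under a piecewise-regularity assumption on $f^{(n+1)}$. The only cosmetic difference is that the paper phrases the last step as breaking $\mathbb{R}$ into finitely many intervals on which $f^{(n+1)}$ is $C^{1}$ and bounding the resulting boundary jumps and $L^{1}$ pieces directly, whereas you package the same estimate as a bounded-variation / Stieltjes argument; both amount to the same control and yield the same constant.
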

\begin{proof}
Since the $k$-th derivative of $f$ is $L^{1}$ integrable for $k=1,\,\ldots,\,n+1$,
according to Riemann\textendash Lebesgue lemma, $\mathcal{F}\left\{ f^{\left(k\right)}\right\} \left(\omega\right)\rightarrow0$
as $\omega\rightarrow\infty$. Applying integration by parts to each
Fourier transform of the derivatives of $f$ as
\begin{align}
\mathcal{F}\left\{ f^{\left(k\right)}\right\} \left(\omega\right) & =\int_{\mathbb{R}}f^{\left(k\right)}\left(x\right)\mathrm{e}^{-\mathrm{i}\omega x}\,\mathrm{d}x\nonumber \\
 & =\frac{1}{\mathrm{i}\omega}\left(\int_{\mathbb{R}}f^{\left(k+1\right)}\left(x\right)\mathrm{e}^{-\mathrm{i}\omega x}\,\mathrm{d}x-\lim_{a\rightarrow\infty}\left[f^{\left(k\right)}\left(x\right)\mathrm{e}^{-\mathrm{i}\omega x}\right]_{-a}^{a}\right).
\end{align}
Clearly the boundary part vanishes for $k=1,\,\ldots,\,n+1$, so we
have
\begin{equation}
\left|\mathcal{F}\left\{ f\right\} \left(\omega\right)\right|=\left|\omega\right|^{-\left(n+1\right)}\left|\mathcal{F}\left\{ f^{\left(n+1\right)}\right\} \left(\omega\right)\right|.
\end{equation}
However, if we integrate by parts once more, because $f^{\left(n+1\right)}$
is not continuous, the integral is broken up into a finite set of
continuous regions as
\begin{equation}
\mathcal{F}\left\{ f^{\left(n\right)}\right\} \left(\omega\right)=\frac{1}{\left(\mathrm{i}\omega\right)^{2}}\left(\sum_{m}\int_{b_{m}}^{b_{m+1}}f^{\left(n+2\right)}\left(x\right)\mathrm{e}^{-\mathrm{i}\omega x}\,\mathrm{d}x-\left[f^{\left(n+1\right)}\left(x\right)\mathrm{e}^{-\mathrm{i}\omega x}\right]_{b_{m}}^{b_{m+1}}\right),
\end{equation}
where each of the boundary parts does not vanish. One can see that
$f^{\left(n+1\right)}$ is finite from the fact that $f^{\left(n\right)}$
is continuous at every point. Moreover, each part of the integrals
of $f^{\left(n+2\right)}$ is also finite because all those parts
are $L^{1}$ integrals. As a result, such a constant $C_{0}$ exists.
\end{proof}
\begin{thm}
\label{thm:err anal}Let $\left\{ \mathrm{D}f\right\} _{n}$ be the
approximation of Riesz derivative given by \eqref{eq:riesz approx}
at $0<x_{n}<1$. There exists a constant $C_{0}$ such that if a function
$f$ is $\mathrm{C}^{N}$, for $0<\alpha<2$, the error of approximation
is bounded as
\begin{equation}
\left|e_{n}\right|=\left|\left\{ \mathrm{D}f\right\} _{n}-\frac{\partial^{\alpha}f}{\partial\left|x\right|^{\alpha}}|_{x=x_{n}}\right|\le C_{0}h^{N}.\label{err bound final}
\end{equation}
\end{thm}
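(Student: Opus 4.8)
The plan is to work entirely in the frequency domain, where the filter construction makes the error explicit. Because $f$ vanishes outside $[0,1]$ and is $\mathrm C^N$, its endpoint values and derivatives vanish, so the two boundary terms in \eqref{eq:riesz approx} drop out and $\left\{\mathrm Df\right\}_n$ equals the whole-line discrete convolution of the samples $f\!\left(x_m\right)$ with the filtered stencil. Reconstructing each sample from the Fourier transform $\tilde f$, this convolution acts as multiplication by a $2\pi/h$-periodic symbol which, using \eqref{eq:transform of 2nd order riesz kernel}, is
\[
\sigma_h\!\left(\omega\right)=-\tilde G_\alpha\!\left(\omega h\right)\frac{2^\alpha}{h^\alpha}\left|\sin\frac{\omega h}{2}\right|^\alpha,
\]
so that, subtracting the exact Riesz symbol $-\left|\omega\right|^\alpha$,
\[
e_n=\frac{1}{2\pi}\int_{\mathbb R}\tilde f\!\left(\omega\right)\mathrm e^{\mathrm i\omega x_n}E\!\left(\omega\right)\,\mathrm d\omega,\qquad E\!\left(\omega\right)=\sigma_h\!\left(\omega\right)+\left|\omega\right|^\alpha.
\]
I would then estimate $\left|e_n\right|\le\frac{1}{2\pi}\int_{\mathbb R}\left|\tilde f\!\left(\omega\right)\right|\left|E\!\left(\omega\right)\right|\,\mathrm d\omega$, splitting the integral at the Nyquist frequency into the principal band $\left|\omega\right|\le\pi/h$ and the tail $\left|\omega\right|>\pi/h$.

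On the principal band $\sin(\omega h/2)$ has constant sign, so $\sigma_h(\omega)=-\left|\omega\right|^\alpha\tilde G_\alpha(\omega h)\sinc(\omega h/2)^\alpha$ and hence $E(\omega)=-\left|\omega\right|^\alpha\bigl(\tilde G_\alpha(\omega h)\sinc(\omega h/2)^\alpha-1\bigr)$. By construction the bracket vanishes to order $N$ at the origin and is smooth on the compact set $\left|\omega h\right|\le\pi$, which gives $\left|E(\omega)\right|\le C_1\left|\omega\right|^\alpha\left|\omega h\right|^N=C_1 h^N\left|\omega\right|^{\alpha+N}$. Feeding in the decay $\left|\tilde f(\omega)\right|\le C_0\left|\omega\right|^{-(N+2)}$ from Lemma~\ref{lem:decay lemma}, the band integrand is controlled by a constant times $h^N\left|\omega\right|^{\alpha+N}$ near the origin (where $\tilde f$ is merely bounded) and by a constant times $h^N\left|\omega\right|^{\alpha-2}$ for $\left|\omega\right|\gtrsim1$; this is the piece that carries the $h^N$ prefactor.

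For the tail I would use that $\sigma_h$ is periodic and uniformly bounded, $\left|\sigma_h(\omega)\right|\le C_2 h^{-\alpha}$, so that $\left|E(\omega)\right|\le C_2 h^{-\alpha}+\left|\omega\right|^\alpha$. Pairing each piece with $\left|\tilde f(\omega)\right|\le C_0\left|\omega\right|^{-(N+2)}$ and integrating over $\left|\omega\right|>\pi/h$ leaves a contribution governed by the lower endpoint $\omega\sim\pi/h$, of size $\sim h^{N+1-\alpha}$. Collecting the two regions and absorbing the constants into a single $C_0$ then yields \eqref{err bound final}.

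The main obstacle is the crossover near the Nyquist frequency $\omega\sim\pi/h$. There the flatness built into the filter no longer helps, the error symbol is as large as $O(h^{-\alpha})$, and the only quantity suppressing it is the decay of $\tilde f$; the whole bound therefore rests on the sharpened rate $\left|\omega\right|^{-(N+2)}$ of Lemma~\ref{lem:decay lemma} rather than the $\left|\omega\right|^{-N}$ of the classical Riemann\textendash Lebesgue lemma. The delicate bookkeeping is exactly in this band-edge region: for $0<\alpha<1$ the tail and the large-$\left|\omega\right|$ part of the band are genuinely higher order and the clean $h^N$ drops out, whereas as $\alpha$ approaches $2$ these terms of size $h^{N+1-\alpha}$ press hardest against the target rate, so the argument must track the $\alpha$-dependence (and the borderline logarithmic factor at $\alpha=1$) with care. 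Verifying that the improved decay lemma keeps every piece at order $h^N$ across the full range $0<\alpha<2$ is the step I expect to require the most attention.
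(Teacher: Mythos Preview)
Your approach is the paper's own: bound the error in the frequency domain by \eqref{eq:error bound 1}, feed in Lemma~\ref{lem:decay lemma}, and inverse-transform. The paper does not split into a principal band and a tail; it asserts \eqref{eq:error bound 1} for all $\omega$, remarks that after inserting the lemma the bound ``uniformly decays,'' and concludes \eqref{err bound final} without further comment---so the Nyquist-edge bookkeeping you single out as delicate (in particular the $h^{N+1-\alpha}$ versus $h^{N}$ tension for $1<\alpha<2$) is simply not addressed there, making your version a more careful execution of the same argument rather than a different route.
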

\begin{proof}
Let $\tilde{f}$ be the Fourier transform of $f$. According to \eqref{eq:error},
the frequency response of $\mathrm{D}f$ near the origin is given
by
\begin{equation}
\mathcal{F}\left\{ \sum_{n}\delta\left(x-x_{n}\right)\left\{ \mathrm{D}f\right\} _{n}\right\} \left(\omega\right)=-\left|\omega\right|^{\alpha}\left(1+O\left(\left(\omega h\right)^{N}\right)\right)\tilde{f}.\label{eq:expansion approximation}
\end{equation}
Then there exists a constant $C_{1}$ such that the error of approximation
in frequency domain
\begin{equation}
\tilde{e}\left(\omega\right)=\mathcal{F}\left\{ \mathrm{D}f-\frac{\partial^{\alpha}f}{\partial\left|x\right|^{\alpha}}\right\} \left(\omega\right)
\end{equation}
satisfies
\begin{align}
\left|\tilde{e}\left(\omega\right)\right| & \le C_{1}\left|\omega\right|^{\alpha+N}h^{N}\left|\tilde{f}\left(\omega\right)\right|.\label{eq:error bound 1}
\end{align}
Now we can apply Lemma \ref{lem:decay lemma}, and plug \eqref{eq:fourier decay upperbound}
into \eqref{eq:error bound 1} such that the right hand side now uniformly
decays with respect to $\left|\omega\right|$. Inverse transform leads
to \eqref{err bound final}. 
\end{proof}
In Theorem \ref{thm:err anal}, we have limited $\alpha$ to be a
positive number smaller than 2, which is typical for physics problems,
but it can also be generalised to higher order. The smoothness requirement
means that this method does require 0 boundary condition. Because
the algorithm and analysis is based on the expansion around the origin,
the convergence order drops off as we approach Nyquist frequency.
For demonstration, we plot the frequency response of the central difference
operator divided by the response of Riesz operator given by $\sinc^{\alpha}\left(\frac{x}{2}\right)\tilde{G}_{\alpha}\left(x\right)$
and the rate of convergence with respect to frequency given by
\begin{equation}
r\left(x\right)=\log_{2}\frac{1-\sinc^{\alpha}\left(\frac{x}{2}\right)\tilde{G}_{\alpha}\left(x\right)}{1-\sinc^{\alpha}\left(\frac{x}{4}\right)\tilde{G}_{\alpha}\left(\frac{x}{2}\right)},\label{eq:freq conv rate}
\end{equation}
where $\tilde{G}_{\alpha}$ is given by \eqref{eq:filter response},
for $x\in\left[0,\pi\right]$ and $\alpha=1.3$. The plots are similar
for various $\alpha$, therefore we only plot them for a single value.
We see in Figure \ref{fig:Response-and-convergence}(a) that the flatness
of the curve around the origin is improved with increasing $N$. From
Figure \ref{fig:Response-and-convergence}(b), we observe that the
higher the convergence order parameter $N$ is, the greater the drop
in the convergence order with respect to the normalized angular frequency.
This decrease will limit the convergence rate depending on the bandwidth
of the function. Despite this, a higher $N$ still results in a greater
convergence order as long as the bandwidth of the function is less
than 2 times the Nyquist frequency.

Because the Vandermonde matrix can be decomposed into lower and upper
triangular matrices without any zero elements in the diagonal entries
\citep{yang2005onthe}, solutions exist for any $N$. However, because
the matrix system \eqref{eq:system of equation matrix form small}
does not give us control over derivatives beyond $N$-th order, there
is no guarantee that this method ensures that there is no overshoot
in the resulting response and the response is always `flatter' than
the result of a lower order one. How close the bound \eqref{eq:error bound 1}
is depends on the constant $C_{1}$, related to the coefficient of
$\left(\omega h\right)^{N}$ in the expansion \eqref{eq:expansion approximation},
which the formulation does not have explicit control. Nonetheless,
numerical results show that there is improvement on the frequency
response up to $N=46$ without extra constraints imposed on the magnitude
of higher order Maclaurin series coefficients. The proposed algorithm
does become unstable at higher $N$ using double precision floating
point. 

The Riesz derivative is a negative definite operator, so $\mathbf{g}$
has to be positive definite for the correct results. We do not provide
a proof for this but numerical results suggest $\left|\mathbf{g}_{0}\right|>\sum_{n>0}\left|\mathbf{g}_{n}\right|$,
which is sufficient for the filter to be positive definite because
of Gershgorin circle theorem. Therefore, positivity can be checked
with minimal overhead during application of the method. Moreover,
because the filter response is symmetric about the point $\pi$, which
also means that point is an extremum, and also because the filter
should be increasing from the origin to cancel out the downward slope
of the sinc function, we should expect that the filter has a response
greater than 1 with a maximum at $\pi$. As for the magnitude of this
maximum, which leads to the upper bound of the eigenvalue of the Toeplitz
matrix in \eqref{eq:riesz approx}, if we assume that $0<\left|\sinc\left(\frac{x}{2}\right)\right|^{\alpha}\tilde{G}_{\alpha}\left(x\right)\le1$
for all $N$ then the upper bound for the maxima is $\pi^{\alpha}$
and so the magnitude of the eigenvalue of the Toeplitz matrix is bounded
by $\left(\pi N_{x}\right)^{\alpha}$.

\section{Numerical Experiment}

To verify the error analysis, we apply the derivative operator to
\begin{equation}
u\left(x\right)=\begin{cases}
x^{q}\left(1-x\right)^{q}, & \text{for }x\in\left[0,1\right],\\
0, & \text{otherwise}.
\end{cases}
\end{equation}
This function belongs to $\mathrm{C}^{q-1}$ and its Riesz derivative
is given in closed-form as
\begin{equation}
\frac{\mathrm{d}^{\alpha}u\left(x\right)}{\mathrm{d}\left|x\right|^{\alpha}}=C_{\alpha}\sum_{n=0}^{q}\binom{q}{n}\frac{\Gamma\left(q+n+1\right)}{\Gamma\left(q+n+1-\alpha\right)}\left(x^{q+n-\alpha}+\left(1-x\right)^{q+n-\alpha}\right).
\end{equation}
We define the error and convergence rate respectively as
\begin{align}
E_{i}^{N} & =\sum_{j=1}^{N_{\min}-1}\left|\left\{ \mathbf{\tilde{u}}_{N}\right\} _{\frac{N_{i}}{N_{\min}}j}-\frac{\mathrm{d}^{\alpha}u\left(x\right)}{\mathrm{d}\left|x\right|^{\alpha}}|_{x=\frac{j}{N_{\min}}}\right|,\\
R_{i}^{N} & =\log_{2}\frac{E_{i}^{N}}{E_{i+1}^{N}},
\end{align}
where $\mathbf{\tilde{u}}_{N}$ is the $N$ order accurate approximation
of the Riesz derivative using \eqref{eq:riesz approx}, $N_{\min}=11,$and
$N_{i}=2^{i}\left(N_{i}-1\right)+1$ is the number of nodes. The results
are tabulated in Tables \ref{tab:E-for-q6}-\ref{tab:r-for-q10}.
From the results for $q=6$, we see that the approximation converges
at a rate close to $O\left(h^{N}\right)$ for all tested $N$ even
though the smoothness requirement is not always met. This can be explained
by the bandwidth of the polynomial being within the Nyquist frequency
and tapers sufficiently fast. This allows the error from within the
Nyquist frequency region to dominate and so the error reduction is
as expected. We do see that with smaller $N_{i}$, the convergence
order is more suppressed with more of the main lobe of frequency response
of $u$ being in the higher end region. For $q=10$, we see that the
results converge similarly with $R_{i}^{N}$ being even more limited
at higher $N$ and lower $N_{i}$. This is because at higher $q$,
$u$ oscillates more and so the frequency response has a higher bandwidth.
It appears that the error converges to a small value that is independent
of the filter order and number of nodes but only dependent on $\alpha$
and $q$. Therefore, it is unlikely that this convergence barrier
is a result of the error from the region in higher frequencies than
Nyquist frequency either because the operator response in those areas
is lifted by the filter we introduced, which would translate into
lower error for higher $N$. We also cannot attribute this to the
numerical errors in evaluating the stencil or the filter since the
error would be multiplied by $N_{i}^{\alpha}$. It is improbable that
this error barrier will become a significant factor in application
but it may be investigated in future work if problems arise.

Because of the unexpected convergence for the previous problem, we
look at a problem with discontinunity at the boundary. This time we
let $u\left(x\right)=\cos\left(2\pi fx\right)$, $f=11$ with compact
support in $x\in\left[0,1\right]$. The solution is given in closed-form
as
\begin{align}
\frac{\mathrm{d}^{\alpha}\cos\left(2\pi fx\right)}{\mathrm{d}\left|x\right|^{\alpha}} & =C_{\alpha}\left(_{0}D_{x}^{\alpha}f\left(x\right)+_{0}D_{x}^{\alpha}f\left(x\right)|_{x=1-x}\right),\\
_{0}D_{x}^{\alpha}f\left(x\right) & =\frac{x^{-\alpha}}{2^{2-\alpha}\sqrt{\pi}}\left((\alpha-2)(\alpha-1)\,_{1}\tilde{F}_{2}\left(1;\frac{3-\alpha}{2},\frac{4-\alpha}{2};-\left(\pi fx\right)^{2}\right)\right.\nonumber \\
 & \left.+\left(2\pi fx\right)^{2}(\alpha-\frac{5}{2})\,_{1}\tilde{F}_{2}\left(2;\frac{5-\alpha}{2},\frac{6-\alpha}{2};-\left(\pi fx\right)^{2}\right)+8\left(\pi fx\right)^{4}\,_{1}\tilde{F}_{2}\left(3;\frac{7-\alpha}{2},\frac{8-\alpha}{2};-\left(\pi fx\right)^{2}\right)\right).
\end{align}

The numerical results are given in Table \ref{tab:E-for-cos} and
\ref{tab:r-for-cos}. The transform of the truncated cosine function
is given by the sum of two sinc functions centered at $\omega=\pm2\pi f$.
This means that when $N_{i}<2f$, the main lobes are beyond Nyquist
frequency. Therefore, we can see that the convergence is much more
limited at lower $N_{i}$. Convergence rate recovers once the main
lobe goes within the Nyquist frequency region. However, due to the
decay rate of the sinc function being at $\omega^{-1}$, convergence
is limited to $O\left(N_{x}^{-1}\right)$. We can conclude that for
non-zero boundary condition, polynomial type approximation is more
suitable as seen in \citep{wang2019numerical,ford2015analgorithm}.

\section{Conclusion}

An efficient method to develop stencil from central difference for
Riesz difference for even higher order of convergence than 4 is presented.
Numerical results verify the greater convergence rates as predicted
in Section \ref{sec:Theory}. The error analysis is performed in the
frequency domain, and because of the sinc function, Nyquist rate applies
to the convergence of the method. There is an extra cost of $O\left(N^{2}\right)$
operations for the evaluation of the filter and $O\left(NN_{x}\right)$
operations for the convolution. However, the higher convergence speed
at $O\left(N_{x}^{-N}\right)$ will significantly reduce the amount
of nodal points required to achieve a particular error tolerance as
demonstrated in numerical experiments such that the overhead will
end up saving computational time and memory resources.

\section*{Declaration of competing interest}

The authors declare that they have no known competing financial interests
or personal relationships that could have appeared to influence the
work reported in this paper.

\section*{Acknowledgement}

The work described in this paper was fully supported by a grant from
the City University of Hong Kong (Project No. 7004827).

\section*{Tables}

\begin{table}[H]
\begin{centering}
\begin{tabular}{|c|c|c|c|c|c|c|c|c|}
\hline 
\multirow{2}{*}{$i$\textbackslash$N$} & \multicolumn{4}{c|}{$\alpha=0.2$} & \multicolumn{4}{c|}{$\alpha=1.8$}\tabularnewline
\cline{2-9} \cline{3-9} \cline{4-9} \cline{5-9} \cline{6-9} \cline{7-9} \cline{8-9} \cline{9-9} 
 & 4 & 6 & 8 & 10 & 4 & 6 & 8 & 10\tabularnewline
\hline 
\hline 
0 & 8.492e-07 & 2.28e-07 & 8.90e-08 & 4.697e-08 & 0.0006732 & 0.0002419 & 0.0001197 & 7.57e-05\tabularnewline
\hline 
1 & 5.639e-08 & 4.50e-09 & 5.939e-10 & 8.798e-11 & 5.104e-05 & 6.092e-06 & 9.398e-07 & 1.009e-07\tabularnewline
\hline 
2 & 3.573e-09 & 7.417e-11 & 2.269e-12 & 7.651e-14 & 3.323e-06 & 9.783e-08 & 1.983e-09 & 2.946e-10\tabularnewline
\hline 
3 & 2.225e-10 & 1.176e-12 & 1.088e-14 & 2.908e-15 & 2.046e-07 & 1.532e-09 & 8.309e-12 & 2.108e-13\tabularnewline
\hline 
4 & 1.351e-11 & 2.053e-14 & 2.966e-15 & 2.958e-15 & 1.147e-08 & 2.36e-11 & 1.428e-13 & 1.199e-13\tabularnewline
\hline 
5 & 7.416e-13 & 3.007e-15 & 2.959e-15 & 2.959e-15 & 3.772e-10 & 2.333e-13 & 1.388e-13 & 1.366e-13\tabularnewline
\hline 
\end{tabular}
\par\end{centering}
\caption{\label{tab:E-for-q6}$E_{d}^{k}$ for $q=6$}
\end{table}

\begin{table}[H]
\begin{centering}
\begin{tabular}{|c|c|c|c|c|c|c|c|c|}
\hline 
\multirow{2}{*}{$i$\textbackslash$N$} & \multicolumn{4}{c|}{$\alpha=0.2$} & \multicolumn{4}{c|}{$\alpha=1.8$}\tabularnewline
\cline{2-9} \cline{3-9} \cline{4-9} \cline{5-9} \cline{6-9} \cline{7-9} \cline{8-9} \cline{9-9} 
 & 4 & 6 & 8 & 10 & 4 & 6 & 8 & 10\tabularnewline
\hline 
\hline 
0 & 3.91 & 5.66 & 7.23 & 9.06 & 3.72 & 5.31 & 6.99 & 9.55\tabularnewline
\hline 
1 & 3.98 & 5.92 & 8.03 & 10.2 & 3.94 & 5.96 & 8.89 & 8.42\tabularnewline
\hline 
2 & 4.01 & 5.98 & 7.70 & 4.72 & 4.02 & 6 & 7.90 & 10.4\tabularnewline
\hline 
3 & 4.04 & 5.84 & 1.88 & -0.0247 & 4.16 & 6.02 & 5.86 & 0.814\tabularnewline
\hline 
4 & 4.19 & 2.77 & 0.00356 & -0.000493 & 4.93 & 6.66 & 0.0403 & -0.188\tabularnewline
\hline 
\end{tabular}
\par\end{centering}
\caption{\label{tab:r-for-q6}$R_{d}^{k}$ for $q=6$}
\end{table}

\begin{table}[H]
\begin{centering}
\begin{tabular}{|c|c|c|c|c|c|c|c|c|}
\hline 
\multirow{2}{*}{$i$\textbackslash$N$} & \multicolumn{4}{c|}{$\alpha=0.2$} & \multicolumn{4}{c|}{$\alpha=1.8$}\tabularnewline
\cline{2-9} \cline{3-9} \cline{4-9} \cline{5-9} \cline{6-9} \cline{7-9} \cline{8-9} \cline{9-9} 
 & 4 & 6 & 8 & 10 & 4 & 6 & 8 & 10\tabularnewline
\hline 
\hline 
0 & 6.193e-09 & 2.707e-09 & 1.502e-09 & 9.186e-10 & 7.223e-06 & 3.866e-06 & 2.309e-06 & 1.488e-06\tabularnewline
\hline 
1 & 4.250e-10 & 5.900e-11 & 1.100e-11 & 2.509e-12 & 6.054e-07 & 1.070e-07 & 2.327e-08 & 6.374e-09\tabularnewline
\hline 
2 & 2.721e-11 & 9.958e-13 & 5.871e-14 & 2.793e-14 & 4.073e-08 & 1.962e-09 & 1.233e-10 & 1.042e-11\tabularnewline
\hline 
3 & 1.731e-12 & 4.026e-14 & 2.686e-14 & 2.674e-14 & 2.562e-09 & 3.119e-11 & 1.589e-12 & 1.414e-12\tabularnewline
\hline 
4 & 1.322e-13 & 2.695e-14 & 2.674e-14 & 2.674e-14 & 1.518e-10 & 1.719e-12 & 1.418e-12 & 1.419e-12\tabularnewline
\hline 
5 & 3.302e-14 & 2.674e-14 & 2.674e-14 & 2.674e-14 & 7.425e-12 & 1.418e-12 & 1.419e-12 & 1.419e-12\tabularnewline
\hline 
\end{tabular}
\par\end{centering}
\caption{\label{tab:E-for-q10}$E_{d}^{k}$ for $q=10$}
\end{table}

\begin{table}[H]
\begin{centering}
\begin{tabular}{|c|c|c|c|c|c|c|c|c|}
\hline 
\multirow{2}{*}{$i$\textbackslash$N$} & \multicolumn{4}{c|}{$\alpha=0.2$} & \multicolumn{4}{c|}{$\alpha=1.8$}\tabularnewline
\cline{2-9} \cline{3-9} \cline{4-9} \cline{5-9} \cline{6-9} \cline{7-9} \cline{8-9} \cline{9-9} 
 & 4 & 6 & 8 & 10 & 4 & 6 & 8 & 10\tabularnewline
\hline 
\hline 
0 & 3.87 & 5.52 & 7.09 & 8.52 & 3.58 & 5.18 & 6.63 & 7.87\tabularnewline
\hline 
1 & 3.97 & 5.89 & 7.55 & 6.49 & 3.89 & 5.77 & 7.56 & 9.26\tabularnewline
\hline 
2 & 3.97 & 4.63 & 1.13 & 0.0626 & 3.99 & 5.97 & 6.28 & 2.88\tabularnewline
\hline 
3 & 3.71 & 0.579 & 0.00646 & -3.55e-05 & 4.08 & 4.18 & 0.164 & -0.00437\tabularnewline
\hline 
4 & 2.00 & 0.0110 & -3.90e-06 & -6.71e-08 & 4.35 & 0.278 & -0.000345 & -0.000128\tabularnewline
\hline 
\end{tabular}
\par\end{centering}
\caption{\label{tab:r-for-q10}$R_{d}^{k}$ for $q=10$}
\end{table}

\begin{table}[H]
\begin{centering}
\begin{tabular}{|c|c|c|c|c|c|c|c|c|}
\hline 
\multirow{2}{*}{$i$\textbackslash$N$} & \multicolumn{4}{c|}{$\alpha=1.2$} & \multicolumn{4}{c|}{$\alpha=1.8$}\tabularnewline
\cline{2-9} \cline{3-9} \cline{4-9} \cline{5-9} \cline{6-9} \cline{7-9} \cline{8-9} \cline{9-9} 
 & 4 & 6 & 8 & 10 & 4 & 6 & 8 & 10\tabularnewline
\hline 
\hline 
0 & 831.9 & 832.2 & 832.4 & 832.6 & 11060 & 11060 & 11070 & 11070\tabularnewline
\hline 
1 & 345.2 & 305.3 & 281.9 & 266.4 & 5915 & 5314 & 4946 & 4695\tabularnewline
\hline 
2 & 38.98 & 15.37 & 7.111 & 3.835 & 757.6 & 296.4 & 127.5 & 58.70\tabularnewline
\hline 
3 & 3.380 & 1.014 & 0.9050 & 0.9107 & 58.04 & 7.690 & 2.094 & 1.514\tabularnewline
\hline 
4 & 0.5387 & 0.4642 & 0.4651 & 0.4651 & 4.367 & 0.8183 & 0.7831 & 0.7837\tabularnewline
\hline 
5 & 0.2328 & 0.2348 & 0.2348 & 0.2348 & 0.5690 & 0.3968 & 0.3971 & 0.3971\tabularnewline
\hline 
\end{tabular}
\par\end{centering}
\caption{\label{tab:E-for-cos}$E_{d}^{k}$ for $\cos\left(2\pi fx\right)$,
$f=11$}
\end{table}

\begin{table}[H]
\begin{centering}
\begin{tabular}{|c|c|c|c|c|c|c|c|c|}
\hline 
\multirow{2}{*}{$i$\textbackslash$N$} & \multicolumn{4}{c|}{$\alpha=1.2$} & \multicolumn{4}{c|}{$\alpha=1.8$}\tabularnewline
\cline{2-9} \cline{3-9} \cline{4-9} \cline{5-9} \cline{6-9} \cline{7-9} \cline{8-9} \cline{9-9} 
 & 4 & 6 & 8 & 10 & 4 & 6 & 8 & 10\tabularnewline
\hline 
\hline 
0 & 1.27 & 1.45 & 1.56 & 1.64 & 0.903 & 1.06 & 1.16 & 1.24\tabularnewline
\hline 
1 & 3.15 & 4.31 & 5.31 & 6.12 & 2.96 & 4.16 & 5.28 & 6.32\tabularnewline
\hline 
2 & 3.53 & 3.92 & 2.97 & 2.07 & 3.71 & 5.27 & 5.93 & 5.28\tabularnewline
\hline 
3 & 2.65 & 1.13 & 0.960 & 0.969 & 3.73 & 3.23 & 1.42 & 0.950\tabularnewline
\hline 
4 & 1.21 & 0.983 & 0.986 & 0.986 & 2.94 & 1.04 & 0.980 & 0.981\tabularnewline
\hline 
\end{tabular}
\par\end{centering}
\caption{\label{tab:r-for-cos}$R_{d}^{k}$ for $\cos\left(2\pi fx\right)$,
$f=11$}
\end{table}

\section*{Figures}

\begin{figure}[H]
\centering{}\subfloat[Relative frequency response of the central difference operator against
that of Riesz derivative]{\begin{centering}
\includegraphics[scale=0.8]{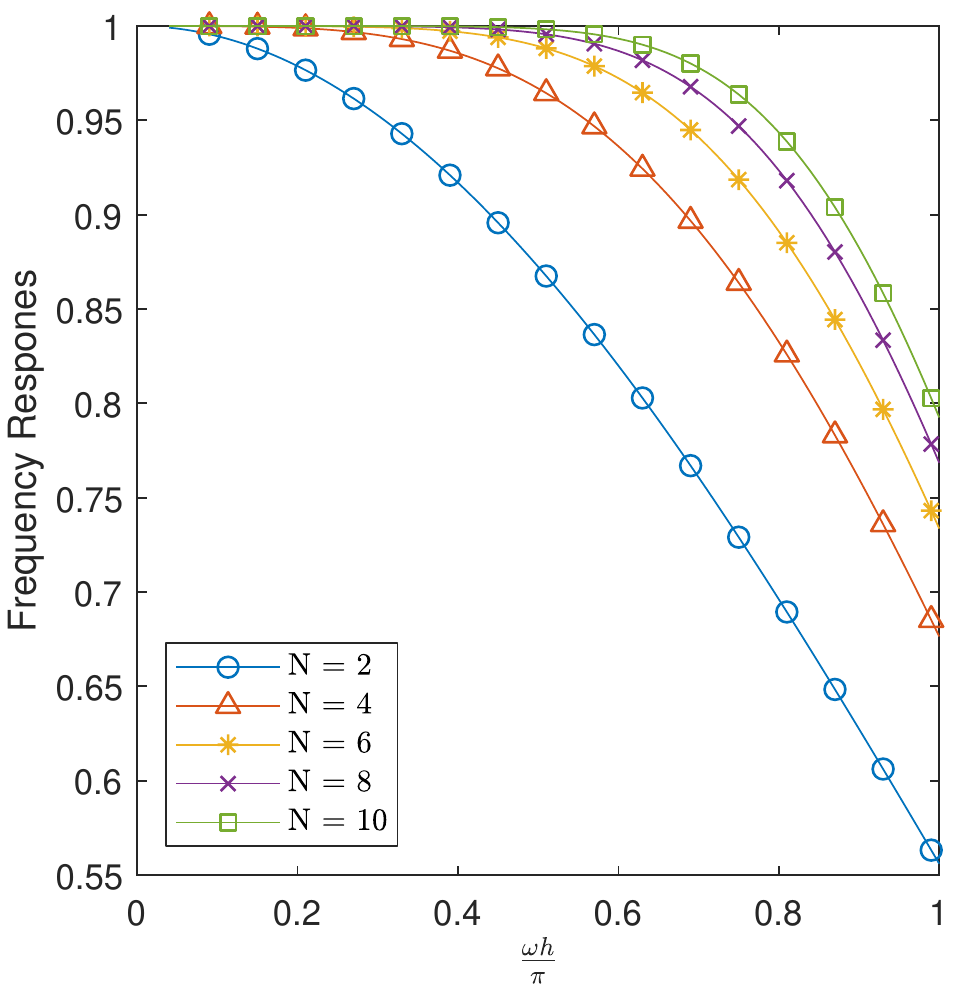}
\par\end{centering}
}\subfloat[Spectral convergence rate given by \eqref{eq:freq conv rate}]{\centering{}\includegraphics[scale=0.8]{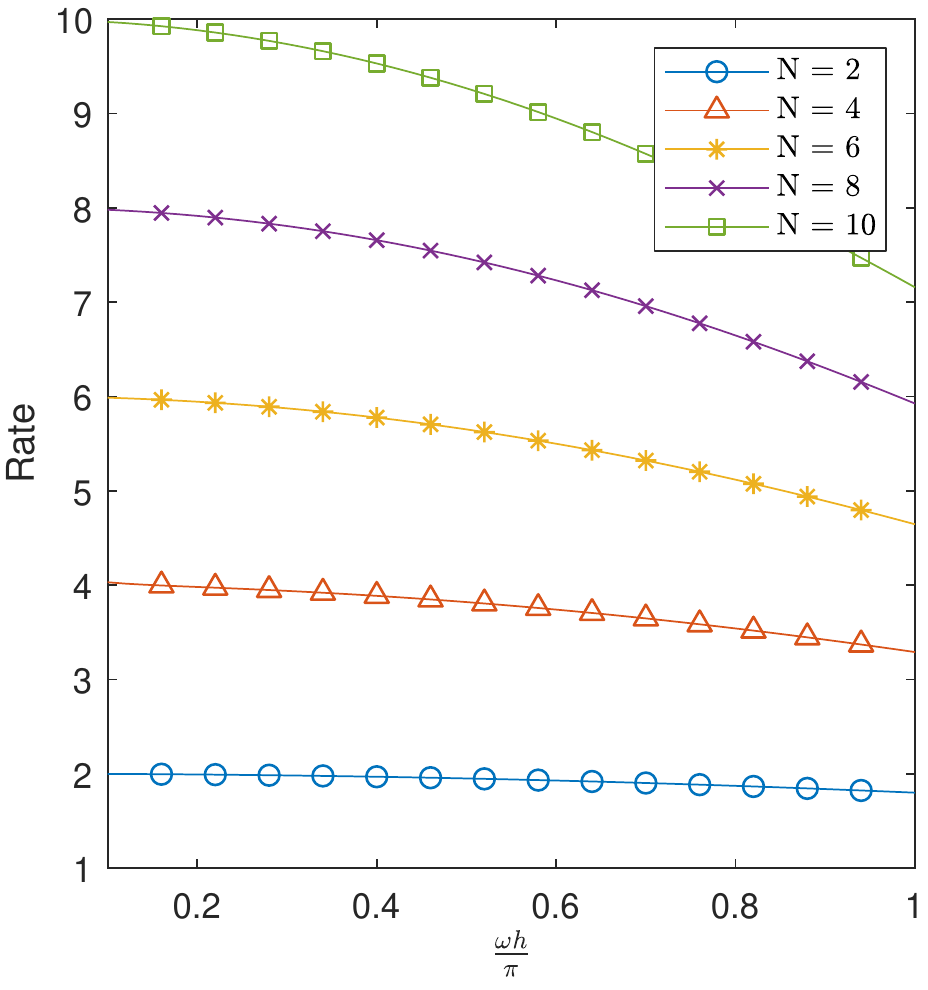}}\caption{\label{fig:Response-and-convergence}Frequency response and spectral
convergence rate of higher order fractional central difference method}
\end{figure}

\appendix
\bibliographystyle{elsarticle-num}
\bibliography{RCD}

\end{document}